\newcommand\blfootnote[1]{
	\begingroup\renewcommand\thefootnote{}\footnote{#1}
	\addtocounter{footnote}{-1}
	\endgroup}
\newtheorem{theorem}{Theorem}[section]
\newtheorem{corollary}[theorem]{Corollary}
\newtheorem{remark}{Remark}[section]
\numberwithin{equation}{section}
\title{\bf Extremal conjugated unicyclic and bicyclic graphs with respect to total-eccentricity index}
\author{Mehar Ali Malik}
\author{Rashid Farooq\thanks{Corresponding author.}~}
\affil{School of Natural Sciences,\\
	National University of Sciences and Technology, Sector H-12, Islamabad, Pakistan}
\date{}
\begin{document}
\maketitle
\blfootnote{\raggedright Email addresses:
alies.camp@gmail.com, mehar.ali@sns.nust.edu.pk (M. A. Malik),
farook.ra@gmail.com (R. Farooq).}

\begin{abstract}
Let $G$ be a molecular graph.
The total-eccentricity index of graph $G$ is defined as the sum of eccentricities of all vertices of $G$.
The extremal trees, unicyclic and bicyclic graphs, and extremal conjugated trees with respect to total-eccentricity index are known.
In this paper, we extend these results and study the extremal conjugated unicyclic and bicyclic graphs with respect to total-eccentricity index.
\end{abstract}
\begin{quote}
{\bf Keywords:} Topological indices, total-eccentricity index, conjugated graphs.
\end{quote}
	
\begin{quote}
	{\bf AMS Classification:} 05C05, 05C35    
\end{quote}	
	
\section{Introduction}
%

Let $G$ be an $n$-vertex molecular graph with vertex set $V(G)$ and edge set $E(G)$.
The vertices and edges of $G$ respectively correspond to atoms and chemical bonds between atoms.
A topological index $T$ is a numerical quantity associated with the chemical structure of a molecule.
The aim of such association is to correlate these indices with various physico-chemical properties of a chemical compound.
An edge between two vertices $u,v\in V(G)$ is denoted by $uv$.
The order and size of $G$ are respectively the cardinalities $|V(G)|$ and $|E(G)|$.
The neighbourhood $N_G(v)$ of a vertex $v$ in $G$ is the set of vertices adjacent to $v$.
A simple graph is a graph without loops and multiple edges.
All graphs considered in this paper are simple.
The degree $d_G(v)$ of a vertex $v$ in $G$ is the cardinality $|N_G(v)|$.
A graph $G$ is called a $k$-regular graph if $d_G(v)=k$ for all $v\in V(G)$.
A vertex of degree $1$ is called a pendant vertex.
Let $P_n$, $C_n$ and $K_n$ respectively denote an $n$-vertex path, cycle and a complete graph.
An $n$-vertex complete bipartite graph and $n$-vertex star is respectively denoted by $K_{a,b}$ and $K_{1,n-1}$ (or simply $S_n$), where $a+b=n$.
A $(v_1,v_n)$-path with vertex set $\{v_1,v_2,\ldots,v_n\}$ is denoted by $v_1v_2\ldots v_n$.
A graph $G$ is said to be connected if there exists a path between every pair of vertices in $G$.
A maximal connected subgraph of a graph is called a component.
A vertex $v\in V(G)$ is called a cut-vertex if deletion of $v$, along with the edges incident on it, increases the number of components of $G$.
A maximal connected subgraph of a graph without any cut-vertex is called a block.
%
A tree is a connected graph containing no cycle.
Thus, an $n$-vertex tree is a connected graph with exactly $n-1$ edges.
An $n$-vertex unicyclic graph is a simple connected graph which contains $n$ edges.
Similarly, an $n$-vertex bicyclic graph is a simple connected graph which contains $n+1$ edges.
	

A matching $M$ in a graph $G$ is a subset of edges of $G$ such that no two edges in $M$ share a common vertex.
A vertex $u$ in $G$ is said to be $M$-saturated if an edge of $M$ is incident with $u$.
A matching $M$ is said to be perfect if every vertex in $G$ is $M$-saturated.
A conjugated graph is a graph that contains a perfect matching.
In graphs representing organic compounds, perfect matchings
correspond to Kekul\'{e} structures, playing an important role in analysis of the resonance energy and stability of hydrocarbons~\cite{Gutman}.
The distance $d_G(u,v)$ between two vertices $u,v\in V(G)$ is defined as the length of a shortest path between $u$ and $v$ in $G$.
If there is no path between vertices $u$ and $v$ then $d_G(u,v)$ is defined to be $\infty$.
The eccentricity $e_G(v)$ of a vertex $v\in V(G)$ is defined as the largest distance from $v$ to any other vertex in $G$.
The diameter ${\rm diam}(G)$ and radius ${\rm rad}(G)$ of a graph $G$ are respectively defined as:
\begin{eqnarray}
\label{rad} {\rm rad}(G) &=& \min_{v\in V(G)}e_G(v),\\
\label{diam} {\rm diam}(G) &=& \max_{v\in V(G)}e_G(v).
\end{eqnarray}
A vertex $v\in V(G)$ is said to be central if $e_G(v) = {\rm rad}(G)$.
The graph induced by all central vertices of $G$ is called the center of $G$, denoted as $C(G)$.
A vertex $w$ is called an eccentric vertex of a vertex $v$ in $G$ if $d_G(v,w) = e_G(v)$. The set of all eccentric vertices of $v$ in a graph $G$ is denoted by $E_G(v)$.
	
The first topological index was introduced by Wiener~\cite{W1947} in 1947, to calculate the boiling points of paraffins. In 1971, Hosoya~\cite{H71} defined the notion of Wiener index for any graph as the half sum of distances between all pairs of vertices.
The average-eccentricity of an $n$-vertex graph $G$ was defined in 1988 by Skorobogatov and Dobrynin~\cite{Skoro1988} as:
\begin{equation}\label{average}
avec(G) = \frac{1}{n}\sum_{u\in V(G)} e_G(u).
\end{equation}
In the recent literature, a minor modification of average-eccentricity index $avec(G)$ is used and referred as total-eccentricity index $\tau(G)$. It is defined as:
\begin{equation}\label{sigma}
\tau(G) = \sum_{u\in V(G)} e_G(u).
\end{equation}
The eccentric-connectivity index and the Randi\'c index of a graph $G$ are defined respectively by 
$\xi(G) = \sum_{v\in V(G)} d_G(v) e_G(v)$ and
$R(G)= \sum_{uv\in E(G)}(d_G(u)d_G(v))^{-\frac{1}{2}}$.
Liang and Liu~\cite{LiLiu2016} proved a conjecture on the relation between the average-eccentricity and Randi\'c index.
Dankelmann and Mukwembi~\cite{DankleMuk2004}
obtained upper bounds on the average-eccentricity in terms of several graph parameters.
Smith et al.~\cite{Smith2016} studied the
extremal values of total-eccentricity index in trees.
Ilic~\cite{Ilic12} studied some extremal graphs with respect to average-eccentricity.
Farooq et al.~\cite{Farooq2017} studied the extremal unicyclic and bicyclic graphs and extremal conjugated trees with respect to total-eccentricity index.
For more details on topological indices of graphs and networks, the author is referred to~\cite{akhter-farooq2019,Doslic}.
In this paper, we extend the results of \cite{Farooq2017} to conjugated unicyclic and bicyclic graphs.
	%
	%
	%
	%
	%
	%
	%
	%
	
	For some special families of graphs of order $n\geq 4$, the total-eccentricity index is given as follows:
	\begin{enumerate}
		\item For a $k$-regular graph $G$, we have $\tau(G) = \frac{\xi(G)}{k}$,
		\item $\tau(K_n) = n$,
		\item $\tau(K_{m,n}) = 2(m+n)$, $m,n\geq 2$,
		\item The total-eccentricity index of a star $S_n$, a cycle $C_n$ and a path $P_n$ is given by
		\begin{eqnarray}
		\label{tau Sn}
		\tau(S_n) &=& 2n - 1,\\
		\tau(C_n) &=& \begin{cases}
		\frac{n}{2}& \mbox{if~ } n\equiv 0 (\bmod 2)\\
		\frac{n-1}{2} & \mbox{if~ } n\equiv 1 (\bmod 2),
		\end{cases}\\
		\label{tau Pn}
		\tau(P_n) &=& \begin{cases}
		\frac{3n^2}{4} - \frac{n}{2} & \mbox{if~ } n\equiv 0 (\bmod 2)\\
		\frac{3n^2}{4} - \frac{n}{2} - \frac{1}{4} & \mbox{if~ } n\equiv 1 (\bmod 2).
		\end{cases}
		\end{eqnarray}
	\end{enumerate}

Let $\{v_1,v_2,\ldots,v_n\}$ be the vertices of a path $P_n$. Let $U_2$ be a unicyclic graph obtained from $P_n$ by joining $v_1$ and $v_3$ by an edge. Similarly, let $B_2$ be a bicyclic graph obtained from $P_n$ by joining $v_1$ with two vertices $v_3$ and $v_4$. Note that when $n\equiv 0\pmod 2$, the graphs $U_2$ and $B_2$ are conjugated and are denoted by 
$\overline{U}_2$ and $\overline{B}_2$, respectively. In Figure~\ref{max conj}, we give two $7$-vertex bicyclic graphs $U_2$ and $B_2$.
For $n\equiv 0\pmod 2$, let $S_n^*$ be an $n$-vertex conjugated tree obtained by identifying one vertex each from $\frac{n-2}{2}$ copies of $P_3$ and deleting a single pendent vertex. Let $v$ be the unique central vertex of $S_n^*$.
Let $\overline{U}_1$ be a conjugated unicyclic graph obtained from $S_n^*$ by adding an edge between $v$ and any vertex not adjacent to $v$. In a similar fashion, let $\overline{B}_1$ be a conjugated bicyclic graph obtained from $S_n^*$ by adding two edges between $v$ and any two vertices not adjacent to $v$ (see Figure~\ref{ext conj}).
\begin{figure}[h]
	\centering
	\includegraphics[height=2cm,width=5cm]{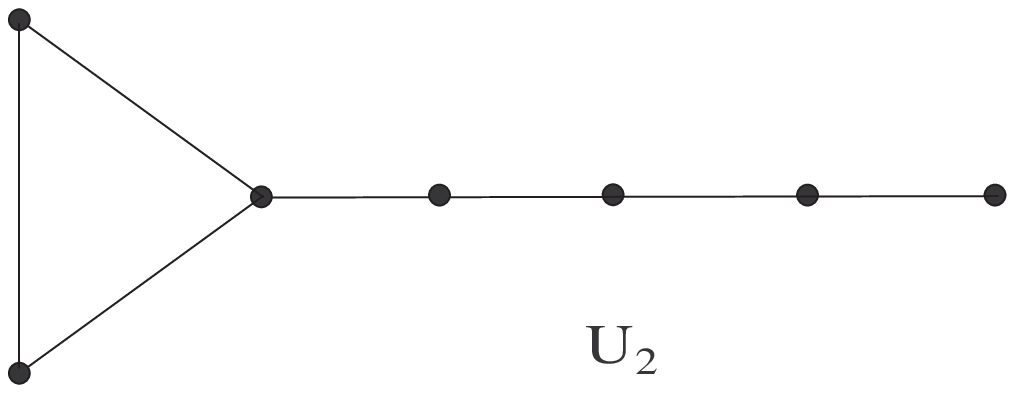}
	\includegraphics[height=2cm,width=5.3cm]{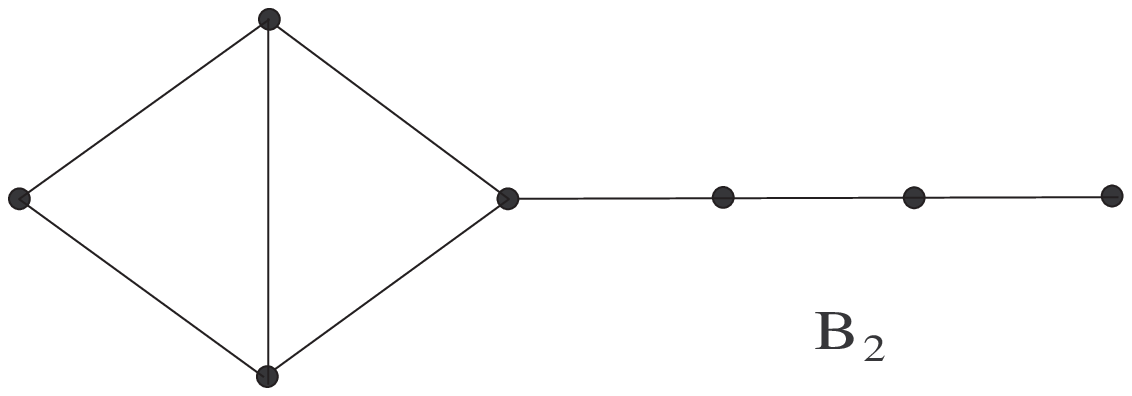}
	\caption{The $7$-vertex unicyclic and bicyclic graphs $U_2$ and $B_2$.}\label{max conj}
\end{figure}
\begin{figure}[h!]
\centering
\includegraphics[height=2.7cm,width=4.5cm]{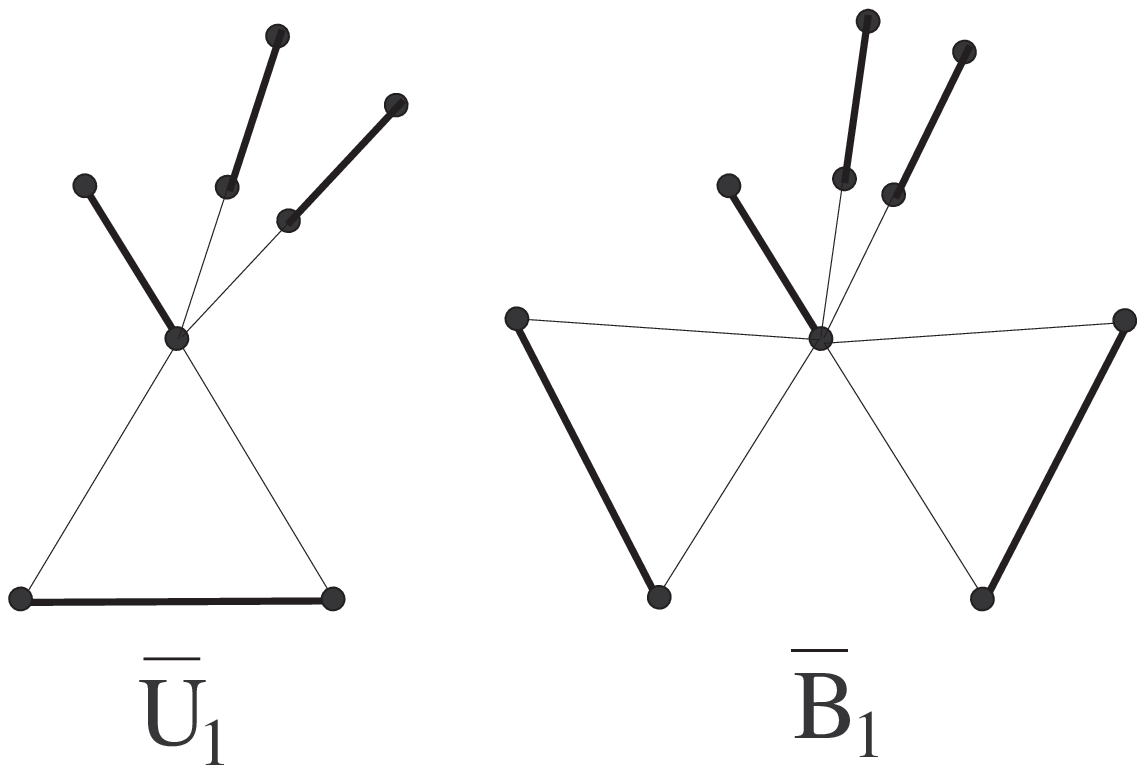}
\includegraphics[height=2.7cm,width=7cm]{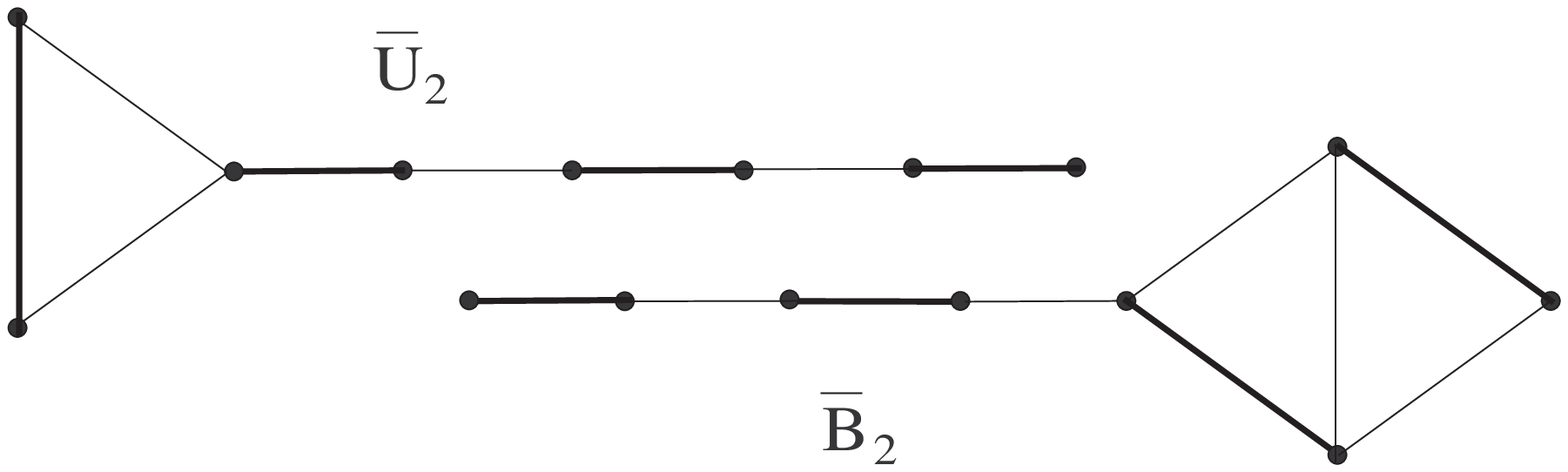}
\caption{The $8$-vertex conjugated unicyclic and bicyclic graphs $\overline{U}_1$, $\overline{U}_2$, $\overline{B}_1$ and $\overline{B}_2$.}\label{ext conj}
\end{figure}

Now we give some previously known results on the center of a graph from \cite{Harary53} and some results on extremal graphs with respect to total-eccentricity index from \cite{Farooq2017}. 
In next theorem, we give a result dealing with the location of center in a connected graph.
\begin{theorem}[Harary and Norman \cite{Harary53}]\label{Harary}
The center of a connected graph $G$ is contained in a block of $G$.
\end{theorem}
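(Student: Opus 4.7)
The plan is to argue by contradiction, assuming that $C(G)$ meets at least two distinct blocks of $G$ and deriving a vertex whose eccentricity exceeds $\mathrm{rad}(G)$, which contradicts centrality.

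First I would translate the hypothesis into a useful separating statement. If $C(G)$ is not contained in a single block, then by considering the block-cut-vertex tree of $G$ (each edge corresponds to a cut-vertex lying in two blocks) one can find a cut-vertex $v\in V(G)$ together with two central vertices $u_1, u_2 \in C(G)$ that lie in different components of $G-v$. (I would spell out that if every pair of central vertices survived in a common component after removing each cut-vertex in turn, then they would all live together in a single leaf-block of the block tree, contradicting the assumption.) This preparatory step is the main obstacle; once the separating cut-vertex is in hand, the rest of the argument is short.

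Next I would exploit the eccentricity of $v$. Let $x$ be an eccentric vertex of $v$, so that $d_G(v,x)=e_G(v)$; we may assume $x\neq v$, since otherwise $G$ is trivial. Because $v$ is a cut-vertex, $x$ lies in exactly one component of $G-v$, so at least one of $u_1,u_2$, say $u_1$, lies in a different component from $x$. Every $u_1$–$x$ path must then pass through $v$, which forces
\begin{equation*}
d_G(u_1,x) \;=\; d_G(u_1,v) + d_G(v,x) \;\geq\; 1 + e_G(v).
\end{equation*}

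Finally I would conclude by comparing eccentricities. From the inequality above,
\begin{equation*}
e_G(u_1) \;\geq\; d_G(u_1,x) \;>\; e_G(v) \;\geq\; \mathrm{rad}(G),
\end{equation*}
contradicting $u_1\in C(G)$, which requires $e_G(u_1)=\mathrm{rad}(G)$. Hence the assumption fails and $C(G)$ must lie within a single block of $G$.
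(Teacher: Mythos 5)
The paper itself gives no proof of this statement; it is imported verbatim from Harary and Norman and used as a black box, so there is no in-paper argument to compare yours against. Your proof is the standard one and is correct: the separating cut-vertex $v$ with two central vertices $u_1,u_2$ in different components of $G-v$ exists because two vertices not lying in a common block are separated by a cut-vertex that is an internal node of the path joining them in the block--cut-vertex tree, hence $v\notin\{u_1,u_2\}$; this is exactly what gives $d_G(u_1,v)\geq 1$ and therefore the strict inequality $e_G(u_1)\geq 1+e_G(v)>\mathrm{rad}(G)$ that completes the contradiction. The only step worth writing out more carefully in a final version is that preparatory reduction, which you yourself flag as the main obstacle; the rest is airtight.
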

The only possible blocks in a unicyclic graph are $K_1$, $K_2$ or a cycle $C_k$. Thus the following corollary gives the center of an $n$-vertex conjugated unicyclic graph $\overline{U}$.
\begin{corollary}\label{Harary2}
If $\overline{U}$ is an $n$-vertex conjugated unicyclic graph with a unique cycle $C_k$, then $C(\overline{U}) = K_1$ or $K_2$, or $C(\overline{U}) \subseteq C_k$
\end{corollary}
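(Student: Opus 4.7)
The plan is to apply Theorem~\ref{Harary} directly and then enumerate the possible blocks of a unicyclic graph. By Harary and Norman's result, $C(\overline{U})$ must lie entirely inside some block $B$ of $\overline{U}$, so it suffices to show that every block of an $n$-vertex unicyclic graph with unique cycle $C_k$ is either a $K_1$, a $K_2$, or the cycle $C_k$ itself.

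First I would recall that a block is a maximal connected subgraph containing no cut-vertex; equivalently, a block is either a single vertex, a single edge (bridge), or a maximal $2$-edge-connected subgraph. Then I would argue by edge classification: let $e=uv$ be any edge of $\overline{U}$. If $e$ lies on the unique cycle $C_k$, then $C_k$ is a $2$-edge-connected subgraph of $\overline{U}$, and it is maximal with this property because adding any further edge or vertex to $C_k$ would create a second cycle, contradicting unicyclicity. Hence $C_k$ is a block. If instead $e$ does not lie on $C_k$, then $e$ is a bridge (its removal disconnects $\overline{U}$, since $e$ does not participate in any cycle), so the block containing $e$ is exactly the $K_2$ formed by $e$. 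Finally, an isolated block $K_1$ can only arise as a trivial case.

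Combining these observations, every block of $\overline{U}$ is either $K_1$, $K_2$, or $C_k$. By Theorem~\ref{Harary}, $C(\overline{U})$ is contained in one such block, which gives the three stated possibilities: $C(\overline{U}) = K_1$, $C(\overline{U}) = K_2$, or $C(\overline{U}) \subseteq C_k$. No step here presents any real obstacle; the only thing to be careful about is to note that in the last case we write $\subseteq$ rather than equality, since the center is the subgraph \emph{induced} by the central vertices and need not contain all of $C_k$.
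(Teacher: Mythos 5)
Your proposal is correct and follows essentially the same route as the paper, which justifies the corollary in one line by noting that the only possible blocks of a unicyclic graph are $K_1$, $K_2$ or the cycle $C_k$ and then invoking Theorem~\ref{Harary}. Your write-up merely fills in the block classification (bridges give $K_2$ blocks, the unique cycle is a maximal $2$-connected block) that the paper leaves implicit.
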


The following results give the extremal unicyclic and bicyclic graphs with respect to total-eccentricity index.

\begin{theorem}[Farooq et al.~\cite{Farooq2017}]
\label{tau max uni}
Among all $n$-vertex unicyclic graphs, $n\geq 4$, the graph $U_2$ shown in Figure~\ref{max conj} has maximal total-eccentricity index.
\end{theorem}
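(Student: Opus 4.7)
My plan is to prove $\tau(U) \le \tau(U_2)$ for every $n$-vertex unicyclic graph $U$, via a structural reduction in three steps.

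First, I would compute $\tau(U_2)$ in closed form. A direct distance calculation in $U_2$ gives $e_{U_2}(v_1) = e_{U_2}(v_2) = n-2$, $e_{U_2}(v_3) = n-3$, and $e_{U_2}(v_k) = \max(k-2, n-k)$ for $4 \le k \le n$; summing these yields the benchmark value of $\tau(U_2)$ (a quadratic in $n$).

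Second, I would reduce to the girth-three case. If the unique cycle of $U$ has length $g \ge 4$, exhibit an edge-swap that decreases $g$ by one: remove an edge $uw$ of the cycle and reattach the ``lighter'' side of the cycle as a pendant chain hanging from a vertex on the other side. A careful pairwise distance comparison shows every eccentricity weakly increases, and at least one strictly, so $\tau$ strictly grows under this operation. Iterating, we may assume $U$ contains a triangle.

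Third, assume $U$ has a triangle $\{x,y,z\}$ with hanging trees $T_x, T_y, T_z$. By Theorem~\ref{Harary} the center of $U$ lies in a block, and since the blocks of a unicyclic graph are $K_1$, $K_2$, or the unique cycle, here the center is contained in the triangle. Now apply two transformations. A pendant-shift lemma, asserting that relocating a hanging subtree from one triangle vertex to another weakly increases $\tau$, consolidates all pendant trees onto a single triangle vertex, say $v_3$; then, invoking the tree-extremal result from~\cite{Farooq2017} that $P_m$ maximizes $\tau$ among $m$-vertex trees, one replaces the attached tree by a path. The only graph surviving both operations is $U_2$.

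The main obstacle is proving the two monotonicity claims (the girth-reducing edge swap and the pendant-shift lemma): because eccentricity is a global invariant, an edge modification can, in principle, shrink distances for vertices far from the modified region. Establishing the required inequalities demands a careful case analysis tracing, for each vertex, the location of its eccentric vertex before and after the operation. The most delicate subcase is when the diametral path previously used the swapped cycle edge, which forces one to produce an alternative path of at least the same length after the swap; once this bookkeeping is in place, the combinatorial conclusion is immediate.
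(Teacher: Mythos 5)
The paper itself offers no proof of this statement---Theorem~\ref{tau max uni} is quoted from \cite{Farooq2017} without argument---so your proposal must stand on its own, and as written both of its key monotonicity lemmas are false. For the girth-reduction step, take $U=C_6$: every vertex has eccentricity $3$, so $\tau(C_6)=18$, but after shrinking the cycle to $C_5$ and reattaching the sixth vertex as a pendant the eccentricity sequence becomes $2,3,3,2,2,3$ and $\tau$ drops to $15$. The operation therefore does not weakly increase every eccentricity, and does not even increase the sum. The same phenomenon shows the statement itself is problematic at $n=4$, where $\tau(C_4)=8$ exceeds $\tau(U_2)=7$, so no chain of $\tau$-increasing moves can terminate at $U_2$ uniformly for $n\ge 4$. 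Even when the sum does go up, pointwise monotonicity fails: passing from $C_6$ directly to $U_2$ sends the eccentricities $3,3,3,3,3,3$ to $4,4,3,2,3,4$, and one vertex strictly loses.

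The pendant-shift lemma fails as well. For $n=7$, attach a $2$-edge path to each of two triangle vertices; the eccentricity sum is $27$. Consolidating both subtrees onto a single triangle vertex gives sum $22<27$; only the subsequent ``replace the attached tree by a path'' step recovers and surpasses this, reaching $\tau(U_2)=29$. So your two operations are not individually monotone and cannot be iterated as claimed; a workable version must either perform a single combined move (e.g., concatenate two hanging subtrees into one path in one step) whose global effect on the sum is controlled, or abandon local surgery altogether and bound $\tau(U)$ directly by comparing the counts $n_i$ of vertices of eccentricity $i$ with those of $U_2$---the counting style this paper uses in its minimality proofs. Note also that the tree-extremal result for $P_m$ cannot be invoked verbatim for a subtree hanging off the cycle, since its vertices' eccentricities are computed in the whole unicyclic graph, not within the subtree. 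Your first step (the eccentricity formula for $U_2$ and hence its $\tau$-value) is correct.
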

\begin{theorem}[Farooq et al.\cite{Farooq2017}]
\label{tau max bi}
Among all $n$-vertex bicyclic graphs, $n\geq 5$, the graph $B_2$ shown in Figure~\ref{max conj} has the maximal total-eccentricity index.
\end{theorem}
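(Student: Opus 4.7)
My plan is to prove the result by combining a structural classification of bicyclic graphs with transformations that weakly increase the total-eccentricity index. Any $n$-vertex bicyclic graph $G$ has a unique minimal subgraph $H$ (its \emph{bicyclic kernel}) with minimum degree at least $2$, obtained by iteratively deleting pendant vertices; then $G$ is recovered from $H$ by attaching rooted trees at the vertices of $H$. The kernel $H$ falls into one of three types: (i) two cycles sharing exactly one vertex (infinity-graph), (ii) two cycles sharing a path of length at least $1$ (theta-graph), or (iii) two vertex-disjoint cycles joined by an internally vertex-disjoint path (dumbbell). The target graph $B_2$ is of type (ii), with kernel equal to two triangles sharing an edge, and with a single long pendant path attached at a degree-$2$ vertex of the shared edge.

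The first step is a consolidation lemma: one may assume, without decreasing $\tau(G)$, that every pendant tree is in fact a path, and that all such pendant paths are merged into a single long path attached at one vertex $v$ of $H$. I would establish this by a standard edge-switching argument. If pendant paths $P$ and $Q$ hang off vertices $u,v \in V(H)$, where $v$ lies on a diametral geodesic but $u$ does not, deleting the edge of $P$ at $u$ and gluing $P$ to the end of $Q$ weakly increases the distance from every remaining vertex to the end of the new longer pendant path while weakly decreasing no eccentricity outside a small neighborhood of $u$. Summing the changes yields $\tau(G') \geq \tau(G)$, and iterating the transformation reduces the problem to graphs of the form $H$-plus-one-pendant-path.

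The second step is to optimize over the finite family of candidates $H$-plus-pendant-path of total order $n$. Within each kernel type I would apply a cycle-shortening transformation: removing one vertex from a cycle of length $\geq 4$ and appending it to the pendant path preserves connectivity and the cycle count, and by comparing eccentricities of the affected vertices one checks that $\tau$ does not decrease. This reduces each kernel to the minimal case (cycles of length $3$, shortest possible connector in the dumbbell case, shortest shared path in the theta case). A final comparison across the three reduced kernels, together with a choice of the best attachment vertex for the pendant path, identifies $B_2$ as the maximizer; the explicit eccentricity sequences of $B_2$ and its small competitors can be written down in terms of $n$ and compared directly.

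The main obstacle will be the cycle-shortening step for the dumbbell kernel and the analysis of where the pendant path should attach. Here lengthening the connecting path between the two disjoint cycles is locally competitive with lengthening the pendant path, and the inequality $\tau(\text{shorter connector}) \leq \tau(\text{longer connector plus shorter pendant})$ must be checked with care, probably by grouping vertices according to which end of the backbone they are closer to and summing the resulting eccentricity contributions. A parallel subtlety arises in the theta case in showing that attaching the pendant at a degree-$2$ endpoint of the shared edge beats attaching at any other vertex of the two triangles. The small cases $n=5,6$ should be handled by direct enumeration to anchor the argument.
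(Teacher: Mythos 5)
You should first note that this paper does not actually prove Theorem~\ref{tau max bi}: it is imported verbatim from \cite{Farooq2017} (a submitted manuscript) and stated without proof, so there is no in-paper argument to compare your proposal against. Judged on its own terms, your overall plan --- classify the bicyclic kernel into infinity-, theta- and dumbbell-type, normalize the pendant forest into a single pendant path, shrink the kernel, and compare a short list of candidates --- is a standard and reasonable route for this kind of extremal problem.

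However, one of your reduction steps is genuinely false. For the dumbbell kernel the within-class maximizer is not the ``consolidated'' graph (two triangles with a short connector plus one long pendant path) but the \emph{stretched} dumbbell $D_n$: two triangles joined by a path passing through all remaining $n-6$ vertices, with no pendant trees. A direct computation gives $\tau(D_n)=\frac{3n^2}{4}-\frac{3n}{2}-2$ for even $n$, while the consolidated version is strictly smaller (for $n=8$: $34$ versus $32$; for $n=12$: $88$ versus $86$). So your step ``reduce to the shortest possible connector in the dumbbell case'' strictly \emph{decreases} $\tau$, and after it you would be comparing $B_2$ against a non-extremal dumbbell, leaving the real competitor unexamined; the fix is to identify $D_n$ as the extremal dumbbell and check $\tau(B_2)-\tau(D_n)=\frac{n}{2}>0$ directly. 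You half-anticipate this when you call the connector-versus-pendant trade-off ``the main obstacle,'' but the reduction as stated contradicts the inequality you write there. Two further points need attention: (i) the consolidation lemma requires more than ``eccentricities only drop near $u$'' --- any vertex whose unique eccentric vertex lies inside the relocated subtree can lose eccentricity, so the losses must be paired against the gains of the relocated vertices to control the sum; and (ii) $B_2$'s pendant path is attached at a degree-$2$ vertex of $K_4-e$ that is \emph{not} an endpoint of the shared edge (both endpoints of the shared edge have degree $3$), so ``a degree-$2$ vertex of the shared edge'' misidentifies the very structure your final comparison must single out.
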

In Section 2, we find extremal conjugated unicyclic and bicyclic graphs with respect to total-eccentricity index.

\section{Conjugated unicyclic and bicyclic graphs}

In this section, we find extremal conjugated unicyclic and bicyclic graphs with respect to total-eccentricity index. 
%
%
%
%
In~\eqref{U_B_1}, we give the total-eccentricity index of the conjugated graphs 
$\overline{U}_1$, $\overline{U}_2$, $\overline{B}_1$ and $\overline{B}_2$ which can easily be computed.
\begin{equation}\label{U_B_1}
\begin{array}{cc}
\tau(\overline{U}_1) = \frac{7}{2}n - 3, 
&
\tau(\overline{U}_2) = \frac{3n^2}{4} - n - \frac{3}{4}, \\[.2cm]
\tau(\overline{B}_1) = \frac{7}{2}n - 4,
&
\tau(\overline{B}_2) = \frac{3}{4}n^2 - n - 2.
\end{array}
\end{equation}
Using Theorem~\ref{Harary} and Corollary~\ref{Harary2}, we prove the following result.
\begin{remark}\label{Theorem 3.1}
When $n=4$, the graph shown in Figure~\ref{uni_n_46}(a) has the smallest total-eccentricity index among all $4$-vertex conjugated unicyclic graphs. 
When $n=6$, the graphs shown in Figure~\ref{uni_n_46}(b) and Figure~\ref{uni_n_46}(c) have smallest total-eccentricity index among $6$-vertex conjugated unicyclic graphs.
When $n=8$, the graph shown in Figure~\ref{uni_n_46}(d) has smallest total-eccentricity index among $8$-vertex conjugated unicyclic graphs.
\end{remark}
\begin{figure}[h!]
\centering
\includegraphics[height=3cm,width=12cm]{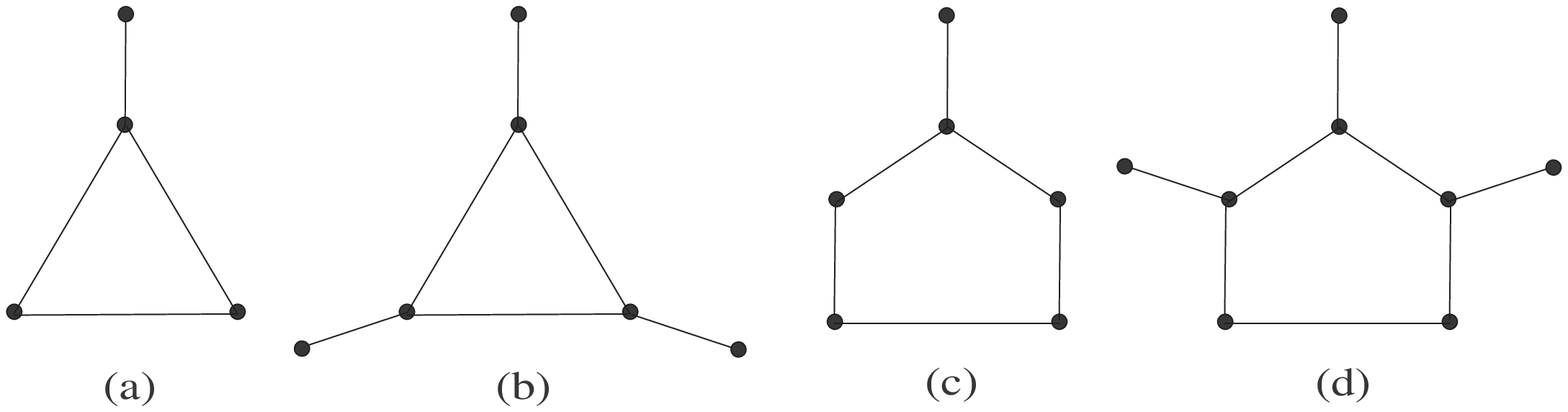}
\caption{The $n$-vertex conjugated unicyclic graphs with minimal total-eccentricity index when $n=4,6,8$.}\label{uni_n_46}
\end{figure}
\begin{theorem}\label{tau conj uni min}
Let $n\equiv 0\pmod 2$ and $n\geq 10$. Then among all $n$-vertex conjugated unicyclic graphs, the graph $\overline{U}_1$ shown in Figure~\ref{ext conj} has the minimal total-eccentricity index.
\end{theorem}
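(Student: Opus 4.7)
The plan is to case on $r := {\rm rad}(\overline{U})$, using Corollary~\ref{Harary2} to locate the center when needed. If $r \geq 4$ then every vertex has eccentricity at least $4$, so $\tau(\overline{U}) \geq 4n > \tfrac{7n}{2}-3$ for $n \geq 10$, disposing of this case. If $r = 1$, then a universal vertex $c$ together with unicyclicity forces the unique cycle to pass through $c$ and be a triangle $c,a,b$, leaving $n-3$ vertices as bare pendants of $c$; of these at most one can be matched to $c$, so the remaining $n - 4$ pendants are unmatchable unless $n = 4$. Hence the substantive work is in $r = 2$ and $r = 3$.

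For $r = 2$, fix a central vertex $c$, let $N = N_{\overline{U}}(c)$ with $|N| = k$, and let $O = V(\overline{U}) \setminus (\{c\} \cup N)$, so $|O| = n - 1 - k$. Every $O$-vertex has at least one neighbor in $N$; since $\overline{U}$ has $n$ edges, the ``extra'' edge budget --- edges within $N$, edges within $O$, and surplus $N$-to-$O$ edges beyond one per $O$-vertex --- equals exactly $1$. That single extra edge leaves only three geometric possibilities for the unique cycle: (A) a triangle $c,a,b$ with $a,b\in N$, (B) a triangle or $5$-cycle using one or two vertices of $O$, and (C) a $4$-cycle $c,n_1,u,n_2,c$ with $u\in O$. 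In each case the perfect matching forces every $N$-vertex holding two or more direct pendants of $c$ or two or more $O$-children to fail saturation; the surviving attachments are at most one bare pendant of $c$ and at most one $P_3$-tail per $N$-vertex. With this rigidity I would write $\tau$ as an affine function of the number of bare pendants of $c$ and the number of $P_3$-tails at $c$, and minimize subject to the vertex-count constraint. Case (A) with $a,b$ carrying no further vertices and $c$ holding exactly one bare pendant yields $\tfrac{7n}{2}-3$, which is precisely $\overline{U}_1$; cases (B) and (C) each give strictly more, typically $\tfrac{7n}{2}-2$ or larger.

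For $r = 3$ I would aim for the strict inequality $\tau(\overline{U}) > \tfrac{7n}{2} - 3$. The only $n$-vertex self-centered unicyclic graph of radius $3$ is $C_6$, which occurs only at $n = 6$, so for $n \geq 10$ some vertex has eccentricity at least $4$. Using the BFS layering $L_0=\{c\}, L_1, L_2, L_3$ at a central vertex and the same matching restriction as above, I would count the $P_3$-tails forced at $L_2 \cup L_3$ by the matching and show that each tail endpoint together with its matched partner both have eccentricity at least $4$. Aggregated, this gives $\tau(\overline{U}) \geq 3n + \tfrac{n-6}{2} = \tfrac{7n}{2} - 3$, with the inequality strict once $n \geq 10$.

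The main obstacle is the case $r = 2$: the interplay between the unique cycle, the perfect-matching constraint, and the three possible cycle shapes generates several subconfigurations, each of which must be reduced to a comparison with $\overline{U}_1$. Pinpointing where the matching pins a vertex, and thereby capping the number of attachments at each branch point, is the binding restriction that makes $\overline{U}_1$ extremal, and systematically verifying this across all configurations is the most delicate step of the proof.
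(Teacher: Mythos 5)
Your overall strategy---case on the radius, dispose of $r\geq 4$ by $\tau\geq 4n$, locate the center via Corollary~\ref{Harary2}, and then count vertices by eccentricity with the perfect matching forcing roughly half of them into the outermost layer---is the same as the paper's, and your $r=2$ analysis is in fact more explicit than the paper's: the edge-budget argument (exactly one edge beyond the spanning structure, hence the cycle is a triangle through $c$, a triangle or $5$-cycle meeting $O$, or a $4$-cycle through $c$) cleanly derives the configurations that the paper only exhibits by inspection of figures, and your identification of the triangle-at-$c$ configuration with one bare pendant as the unique minimizer achieving $\tfrac{7n}{2}-3$ matches the paper's Case~3. Your explicit dismissal of $r=1$ is a small point the paper omits.

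The genuine gap is in your $r=3$ case. You propose to show $\tau(\overline{U})\geq 3n+\tfrac{n-6}{2}=\tfrac{7n}{2}-3$ by a layering-plus-matching count and then assert that the inequality is ``strict once $n\geq 10$''; neither the count $\tfrac{n-6}{2}$ nor the strictness is justified, and since your bound lands exactly on $\tau(\overline{U}_1)$, the strictness is precisely what carries the case. This is also where you diverge from the paper, which uses a much simpler and stronger argument: when ${\rm rad}(\overline{U})=3$, the vertices of eccentricity $3$ are exactly the central vertices, and by Theorem~\ref{Harary} the center lies in a block of $\overline{U}$, i.e.\ in $K_1$, $K_2$, or the unique cycle $C_k$; since ${\rm rad}(\overline{U})\geq\lfloor k/2\rfloor$ forces $k\leq 7$, at most seven (in the paper's bookkeeping, five) vertices have eccentricity $3$ and all others have eccentricity at least $4$, giving $\tau(\overline{U})\geq 3(7)+4(n-7)=4n-7>\tfrac{7n}{2}-3$ for $n\geq 10$ with no matching argument needed. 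You should replace your $r=3$ sketch with this center-in-a-block count (which you already invoke for $r\leq 2$); as written, that case of your proof does not close.
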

\begin{proof}
Let $\overline{U}_1$ be the $n$-vertex conjugated unicyclic graph shown in Figure~\ref{ext conj}.
Let $\overline{U}$ be an arbitrary $n$-vertex conjugated unicyclic graph with a unique cycle $C_k$.
We show that $\tau(\overline{U}) \geq \tau(\overline{U_1})$.
Let $n_i$ denote the number of vertices with eccentricity $i$ in $\overline{U}$.
If $k \geq 8$ or ${\rm rad}(\overline{U}) \geq 4$ then
\begin{equation*}
\tau(\overline{U}) \geq 4n > \frac{7n}{2}-3 = \tau(\overline{U}_1).
\end{equation*}
In the rest of the proof, we assume that $k \in \{3,4,5,6,7\}$ and ${\rm rad}(\overline{U}) \in \{2,3\}$.
Let $k\in \{6,7\}$. If $x$ is a vertex of $\overline{U}$ such that $x$ is not on $C_k$, then $e_{\overline{U}}(x) \geq 4$.
Also, it is easily seen that there are at most 
five vertices on $C_k$ with eccentricity $3$.
Thus
\begin{equation}\label{ecc_k67}
\tau(\overline{U}) \geq 3(5) + 4(n-5) = 4n - 5 > \frac{7n}{2}-3 = \tau(\overline{U}_1).
\end{equation}
We complete the proof by considering the following cases.\\
\textbf{Case 1}.
When ${\rm rad}(\overline{U}) = 3$ and $k\in \{3,4,5\}$.
By Corollary~\ref{Harary2}, $C(\overline{U}) = K_1$ or $C(\overline{U}) = K_2$ or $C(\overline{U}) \subseteq C_k$.
This shows that $\overline{U}$ has at most five vertices with eccentricity $3$.
Thus the inequality~\eqref{ecc_k67} holds in this case.\\
\textbf{Case 2}.
When ${\rm rad}(\overline{U}) = 2$ and $k\in \{4,5\}$. Then ${\rm diam}(\overline{U}) \leq 2 {\rm rad}(\overline{U}) = 4$ and there will be exactly one vertex in $C(\overline{U})$. That is, $n_2 = 1$. Let $v$ be the vertex with $e_{\overline{U}}(v) = 2$. Then $v\in V(C_k)$. Considering several possibilities for longest possible paths (of length $2$) starting from $v$ and that $\overline{U}$ is conjugated, one can see that $\overline{U}$ is isomorphic to one of the graphs shown in Figure~\ref{r=2_k=5}. Moreover, observe that $n_4\geq \frac{n}{2}-1$.
\begin{figure}[h]
\centering
\includegraphics[height=3.5cm,width=8.4cm]{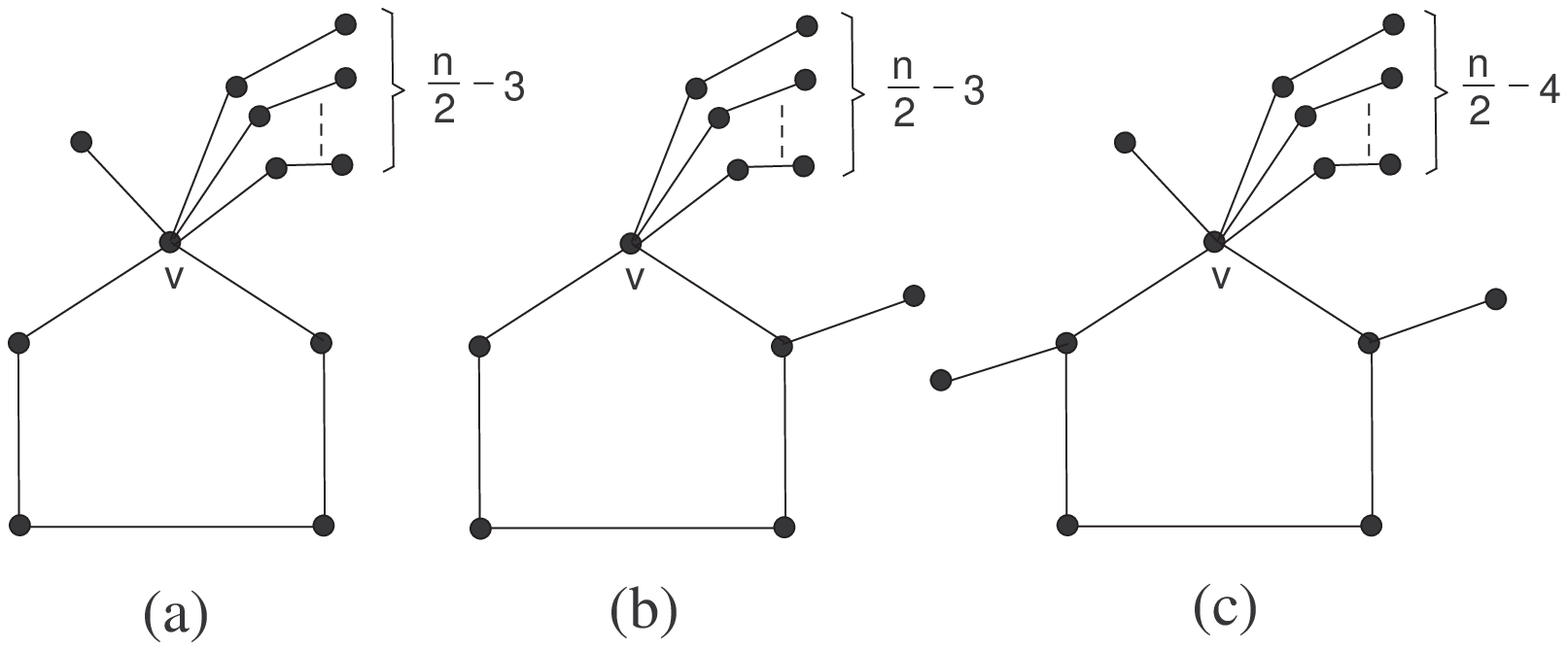}
\includegraphics[height=3.5cm,width=7cm]{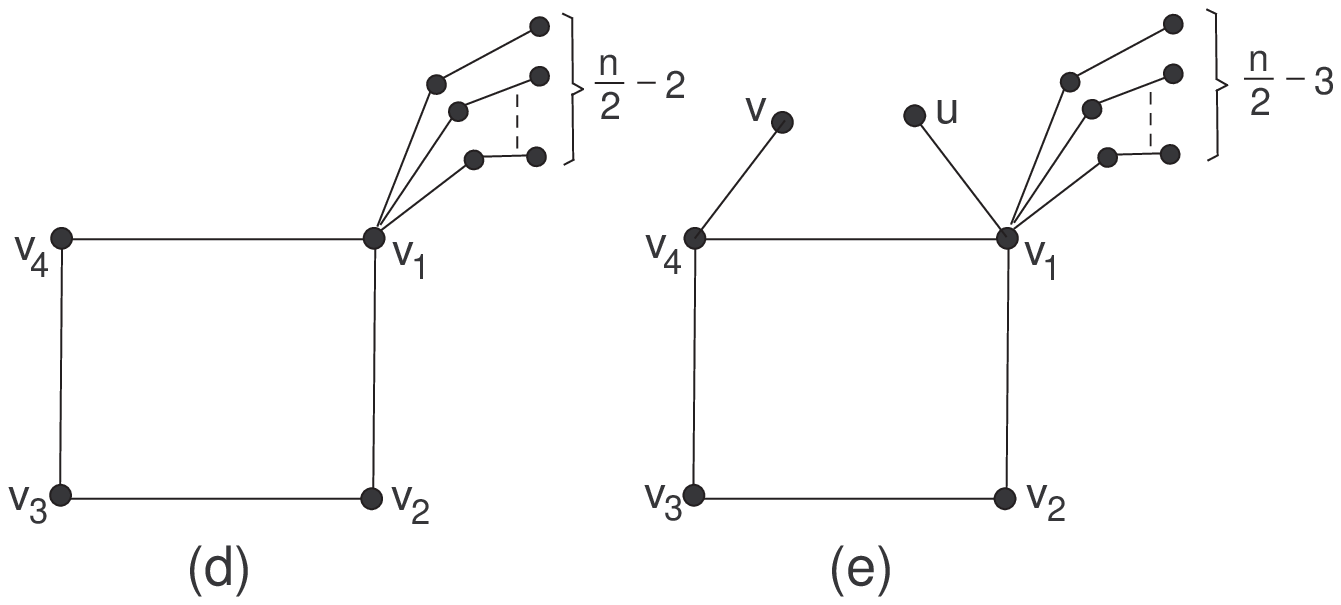}
\caption{The $n$-vertex conjugated unicyclic graphs discussed in Case 2.}\label{r=2_k=5}
\end{figure}

\noindent Since $n_2 + n_3 + n_4 = n$ and $n_2 = 1$, we can write
\begin{eqnarray*}
\tau(\overline{U}) &=& 2n_2 + 3n_3 + 4n_4\\
&=& 2 + 3(n_3 + n_4) + n_4\\
&=& 3n - 1 + n_4\\
&\geq& 3n - 1 + \frac{n}{2} - 1
= \frac{7n}{2} - 2 >  \tau(\overline{U}_1).
\end{eqnarray*}
%
%
\\
\textbf{Case 3.}
When ${\rm rad}(\overline{U}) = 2$ and $k=3$.
Then $n_2 = 1$ (see Figure~\ref{r=2_k=3}).
Let $v$ be the unique central vertex of $\overline{U}$.
Then either $v$ is a vertex of $C_3$ or $v$ is adjacent to a vertex of $C_3$.
%
%
%
When $v\in V(C_3)$, then $\overline{U}$ is isomorphic to one of the graphs shown in Figure~\ref{r=2_k=3}(a), \ref{r=2_k=3}(b) or \ref{r=2_k=3}(c). In this case, all vertices with eccentricity $4$ are pendent.
This gives
$n_4\geq \frac{n}{2} - 2$.
Therefore
\begin{eqnarray*}
\tau(\overline{U}) &=& 2 (1) + 3(n_3+n_4) + n_4\\
&\geq& 3n - 1 + \frac{n}{2} - 2\\
&=&\frac{7n}{2} - 3 = \tau(\overline{U}_1).
\end{eqnarray*}
Similarly, if the central vertex $v$ is not on $C_3$, then $\overline{U}$ is isomorphic to one of the graphs shown in Figure~\ref{r=2_k=3}(d) or \ref{r=2_k=3}(e).
Note that $n_4 \geq \frac{n}{2}$. Thus
$\nonumber \tau(\overline{U}) \geq 2 (1) + 3(n - 1) + \frac{n}{2}
= \frac{7n}{2}-1 > \tau(\overline{U}_1).$

Combining all the cases, we see that $\overline{U}_1$ is the minimal graph with respect to total-eccentricity index. This completes the proof.
\begin{figure}[h]
\centering
\includegraphics[width=12cm]{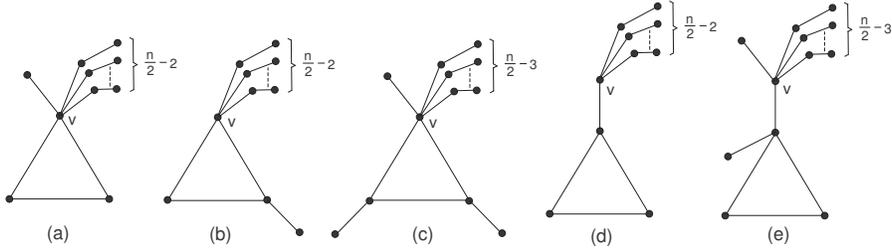}
\caption{The $n$-vertex conjugated unicyclic graphs discussed in Case 3.}\label{r=2_k=3}
\end{figure}
\end{proof}
The following theorem gives the maximal conjugated unicyclic graphs with respect to total-eccentricity index.
\begin{theorem}\label{tau conj uni max}
Let $n\equiv 0(\bmod 2)$. Then the $n$-vertex conjugated unicyclic graph corresponding to the maximal total-eccentricity index is the graph $\overline{U}_2$ shown in Figure~\ref{ext conj}.
\end{theorem}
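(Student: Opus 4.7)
The plan is to deduce this theorem as an immediate consequence of Theorem~\ref{tau max uni}. Step one is to observe that $\overline{U}_2$, as constructed in the introduction, is literally the same graph as the maximizer $U_2$ of Figure~\ref{max conj}: both are obtained from $P_n$ with vertex set $\{v_1,\ldots,v_n\}$ by inserting the chord $v_1v_3$, and the notation $\overline{U}_2$ merely records the additional fact that, for even $n$, this graph admits a perfect matching. Step two is to exhibit such a matching to confirm membership in the conjugated class; the edge set $\{v_1v_2,\,v_3v_4,\,v_5v_6,\ldots,\,v_{n-1}v_n\}$ works for every even $n\ge 4$.

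Step three is the conclusion, and is essentially a one-liner. Theorem~\ref{tau max uni} asserts that $U_2$ attains the maximum of $\tau$ over the family of all $n$-vertex unicyclic graphs for $n\ge 4$. Restricting this maximum to the subfamily of $n$-vertex conjugated unicyclic graphs cannot increase it, and the value is still attained because $U_2=\overline{U}_2$ itself belongs to this subfamily. Therefore $\overline{U}_2$ realizes the maximal total-eccentricity index within the conjugated class; the explicit value $\tau(\overline{U}_2)=\tfrac{3n^2}{4}-n-\tfrac{3}{4}$ is already recorded in~\eqref{U_B_1} and can be stated for completeness.

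I do not expect a genuine obstacle here: the hard work was carried out inside Theorem~\ref{tau max uni}, and the present statement reduces to the observation that its extremal graph happens to lie in the conjugated subfamily. The only subtlety worth flagging is the small-case boundary: the hypothesis $n\equiv 0\pmod 2$ (tacitly $n\ge 4$, so that $\overline{U}_2$ is well-defined) keeps us inside the range where Theorem~\ref{tau max uni} applies, so no separate base case is needed. If the author intends a self-contained argument that does not lean on Theorem~\ref{tau max uni}, one would instead mimic its proof scheme—bounding $\tau$ in terms of the diameter, showing that enlarging the cycle or attaching branches off the path strictly decreases $\tau$, and reducing to the unique path-like conjugated configuration $\overline{U}_2$—but in view of Theorem~\ref{tau max uni} this detour is unnecessary.
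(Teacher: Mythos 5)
Your proposal is correct and follows exactly the paper's own argument: the conjugated unicyclic graphs form a subclass of the unicyclic graphs, the maximizer $U_2$ from Theorem~\ref{tau max uni} coincides with $\overline{U}_2$ and admits a perfect matching for even $n$, so the restriction of the maximum to the conjugated subclass is still attained there. Your explicit matching $\{v_1v_2, v_3v_4, \ldots, v_{n-1}v_n\}$ is a small but welcome addition that the paper leaves implicit.
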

\begin{proof}
Note that the class of all $n$-vertex conjugated unicyclic graphs forms a subclass of the class of all $n$-vertex unicyclic graphs. From Theorem~\ref{tau max uni} we see that among all $n$-vertex unicyclic graphs, the graph $U_2$ (see Figure~\ref{max conj}) has the largest total-eccentricity index.
Since $U_2$ admits a a perfect matching when $n\equiv 0(\bmod 2)$, the result follows.
%
\end{proof}
%
%
%
%
\begin{corollary}
For an $n$-vertex conjugated unicyclic graph $\overline{U}$, we have $\frac{7n}{2} - 3 \leq \tau(\overline{U}) \leq \frac{3n^2}{4} - n - \frac{3}{4}$.
\end{corollary}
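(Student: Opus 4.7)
The plan is to derive this corollary immediately from the two preceding theorems, without any additional argument beyond substitution. Specifically, I would first appeal to Theorem \ref{tau conj uni min}, which asserts that $\overline{U}_1$ is the minimizer of $\tau$ over the class of $n$-vertex conjugated unicyclic graphs for $n \geq 10$, to obtain $\tau(\overline{U}) \geq \tau(\overline{U}_1)$. Substituting the closed-form value $\tau(\overline{U}_1) = \frac{7n}{2} - 3$ from equation \eqref{U_B_1} yields the lower bound. I would then appeal to Theorem \ref{tau conj uni max}, which identifies $\overline{U}_2$ as the maximizer, to obtain $\tau(\overline{U}) \leq \tau(\overline{U}_2) = \frac{3n^2}{4} - n - \frac{3}{4}$, again invoking \eqref{U_B_1} for the explicit value. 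Combining the two inequalities gives the stated sandwich.

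Because every step has already been carried out in the preceding theorems and in the calculation of \eqref{U_B_1}, there is no substantive obstacle; the corollary is essentially a packaging of results already in hand. The only mild subtlety is the range of $n$: Theorem \ref{tau conj uni min} requires $n \geq 10$, so if one wishes to assert the lower bound for every even $n \geq 4$, one must also inspect the small cases $n \in \{4,6,8\}$ identified in Remark \ref{Theorem 3.1} and verify by a direct eccentricity computation on the minimizers depicted in Figure \ref{uni_n_46} that their total-eccentricity indices meet $\frac{7n}{2}-3$. Most naturally, the corollary should be read with $n \geq 10$, matching the hypothesis of the minimization theorem, in which case its proof is a one-line consequence of Theorems \ref{tau conj uni min} and \ref{tau conj uni max}.
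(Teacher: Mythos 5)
Your proposal matches the paper's proof exactly: the corollary is obtained by combining Theorem~\ref{tau conj uni min}, Theorem~\ref{tau conj uni max} and the explicit values in equation~\eqref{U_B_1}. Your additional remark about the $n\geq 10$ hypothesis and the small cases in Remark~\ref{Theorem 3.1} is a fair point of care that the paper itself glosses over, but it does not change the argument.
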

\begin{proof}
Using Theorem~\ref{tau conj uni min}, Theorem~\ref{tau conj uni max} and equation~\eqref{U_B_1}, we obtain the required result.
\end{proof}

The next theorem gives the minimal conjugated bicyclic graphs with respect to total-eccentricity index.
\begin{remark}\label{Theorem 3.4}
Let $n = 4$. Then among all $4$-vertex conjugated bicyclic graphs,
one can easily see that the graph shown in Figure~\ref{counterbi}(a) has the minimal total-eccentricity index.
Similarly, when $n=6$ and $8$, then the graphs respectively shown in Figure~\ref{counterbi}(b) and Figure~\ref{counterbi}(c) have the minimal total-eccentricity index among all $6$-vertex and $8$-vertex conjugated bicyclic graphs.
\end{remark}
\begin{figure}[h!]
\centering
\includegraphics[width=7cm]{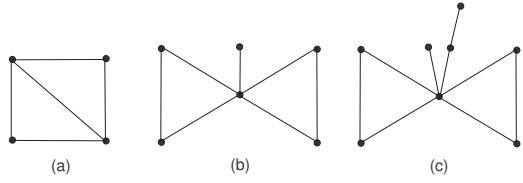}
\caption{The $n$-vertex conjugated bicyclic graphs with minimal total-eccentricity index when $n=4,6$ and $8$.}\label{counterbi}
\end{figure}
\begin{theorem}\label{tau conj bi min}
Let $n\equiv 0\pmod 2$ and $n\geq 10$. Then among the $n$-vertex conjugated bicyclic graphs, the graph $\overline{B}_1$ shown in Figure~\ref{ext conj} has the minimal total-eccentricity index.
\end{theorem}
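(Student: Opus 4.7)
The plan is to mimic the proof of Theorem~\ref{tau conj uni min}, adapting the counting to the richer bicyclic setting. Let $\overline{B}$ be an arbitrary $n$-vertex conjugated bicyclic graph and let $n_i$ denote the number of its vertices of eccentricity $i$; the aim is to show $\tau(\overline{B}) \geq \tau(\overline{B}_1) = \frac{7n}{2} - 4$. First, if ${\rm rad}(\overline{B}) \geq 4$ then $\tau(\overline{B}) \geq 4n > \frac{7n}{2} - 4$, and if $\overline{B}$ contains a cycle of length at least $8$ then a counting argument parallel to~\eqref{ecc_k67} gives $\tau(\overline{B}) \geq 4n - 5 > \frac{7n}{2} - 4$ for $n \geq 10$. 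This reduces the problem to ${\rm rad}(\overline{B}) \in \{2,3\}$ with every cycle of length at most~$7$.

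Next I would state the bicyclic analogue of Corollary~\ref{Harary2}: by Theorem~\ref{Harary}, $C(\overline{B})$ lies in a single block, and the non-trivial blocks of a bicyclic graph are cycles $C_k$ or at most one $\theta$-block (two vertices joined by three internally disjoint paths). Under the reductions above, the central block comes from an explicit short list. In the subcase ${\rm rad}(\overline{B}) = 3$, a direct inspection using the cycle-length bound and the block constraint shows that only a bounded (uniformly in $n$) number of vertices have eccentricity~$3$, so $\tau(\overline{B}) \geq 4n - O(1) > \frac{7n}{2} - 4$ for $n \geq 10$.

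In the remaining subcase ${\rm rad}(\overline{B}) = 2$ we have ${\rm diam}(\overline{B}) \leq 4$, so $n_2 + n_3 + n_4 = n$ and
\[
\tau(\overline{B}) = 2n_2 + 3n_3 + 4n_4 = 3n - n_2 + n_4.
\]
Any eccentricity-$2$ vertex must dominate $\overline{B}$ within distance~$2$, which forces $n_2 \leq 2$, so the goal reduces to $n_4 \geq \tfrac{n}{2} - 2$ with equality only for $\overline{B}_1$. I would enumerate the possible central blocks---cycles $C_3, C_4, C_5$, a few small $\theta$-blocks, and the degenerate $K_1, K_2$---and for each of them attach conjugated ``legs'' of length at most two respecting the perfect matching. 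Since each pendant is matched to its unique support vertex, the perfect matching forces a sufficient supply of eccentricity-$4$ pendants and yields the desired lower bound on $n_4$ in every configuration.

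The main obstacle is that the block menu of a bicyclic graph is strictly richer than the unicyclic one: the central block can now be a $\theta$-block, creating subcases absent from Theorem~\ref{tau conj uni min}, and the perfect-matching constraint interacts non-trivially with the prescribed internal edges of such a block. I would address this by explicitly listing the (few) $\theta$-blocks of radius at most~$2$ and verifying the inequality on $n_4$ for each by inspection, using that the internal edges of a $\theta$-block restrict which vertices may carry pendant attachments once a perfect matching is imposed. The small-$n$ exceptions collected in Remark~\ref{Theorem 3.4} explain the hypothesis $n \geq 10$ and serve as sanity checks for the case analysis.
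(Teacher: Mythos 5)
Your overall strategy---dispose of large radius and long cycles first, then classify the central block via Theorem~\ref{Harary} and use the perfect matching to force $n_4\geq \frac{n}{2}-2$---is the same as the paper's, but your opening reduction contains a genuine error that silently removes a family of competitors from the later case analysis. You claim that if $\overline{B}$ contains a cycle of length at least $8$ then, in parallel with~\eqref{ecc_k67}, $\tau(\overline{B})\geq 4n-5$. This fails when the two cycles share edges. Consider the $\theta$-block with hubs $u,v$ joined by the edge $uv$ and by two paths $ua_1a_2a_3v$ and $ub_1b_2b_3v$ of length $4$: it contains the $8$-cycle $a_1a_2a_3vb_3b_2b_1u$, yet $e(u)=e(v)=2$. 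Attaching a pendant path $uw_1w_2$ yields a conjugated bicyclic graph on $n=10$ vertices (a perfect matching is $w_1w_2$, $ua_1$, $a_2a_3$, $vb_3$, $b_1b_2$) with $n_2=1$, $n_3=4$, $n_4=5$, hence $\tau=34<4n-5=35$. The theorem's conclusion still holds for this graph ($34>\frac{7n}{2}-4=31$), but your intermediate bound does not; and since you then restrict the enumeration of central blocks to graphs all of whose cycles have length at most $7$, configurations of this kind are never examined at all. The paper avoids the problem by splitting on whether the two cycles are edge-disjoint: in its Case~2 the longest cycle $C_{k_3}$ of the shared-edge configuration is allowed to have length up to $12$ (only $k_3\geq 13$ forces every eccentricity to be at least $4$), and the ranges $k_3\in\{9,\ldots,12\}$ and $k_3\in\{4,\ldots,8\}$ get separate eccentricity counts.

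The omission matters quantitatively: the bare $\theta$-block above on $n=8$ vertices is conjugated and attains $\tau=24=\frac{7n}{2}-4$ exactly, so $\theta$-blocks whose longest cycle has length $8$ are genuinely competitive with $\overline{B}_1$ and cannot be discarded by a crude $4n-O(1)$ bound; they are precisely the graphs the paper must inspect in Figure~\ref{r=2_k3=4}(c)--(h). Your identity $\tau(\overline{B})=3n-n_2+n_4$, the bound $n_2\leq 2$, and the matching argument for producing eccentricity-$4$ pendants are all sound and mirror the paper's computation, so the outline is repairable: replace the blanket exclusion of cycles of length at least $8$ by the weaker exclusion $k_3\geq 13$ in the shared-edge case, and extend your block enumeration to $\theta$-blocks with longest cycle up to $12$ (radius $3$) and up to $8$ (radius $2$). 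As written, however, the case analysis is incomplete.
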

\begin{proof}
%
Let $\overline{B}_1$ be the $n$-vertex conjugated bicyclic graph shown in Figure~\ref{ext conj}.
%
%
Let $\overline{B}$ be an arbitrary $n$-vertex conjugated bicyclic graph and $\overline{B} \ncong \overline{B}_1$.  Let $C(\overline{B})$ denote the center of $\overline{B}$ and $n_i$ denote the number of vertices with eccentricity $i$.
The proof is divided into two cases depending upon the number of cycles in $\overline{B}$.
\\
\textbf{Case 1}.
When $\overline{B}$ contains two edge-disjoint cycles $C_{k_1}$ and $C_{k_2}$ of lengths $k_1$ and $k_2$, respectively.
Without loss of generality, assume that $k_1\leq k_2$.
%
%
%
If ${\rm rad}(\overline{B})\geq 4$ or $k_2\geq 8$, then
$$\tau(\overline{B}) \geq 4n > \frac{7n}{2}-4 = \tau(\overline{B}_1).$$
Thus, we assume that $k_2\in \{3,4,5,6,7\}$ and ${\rm rad}(\overline{B}) \in \{2,3\}$.
If $k_2\in \{6,7\}$, then for any vertex $x \notin V(C_{k_2})$, $e_{\overline{B}}(x) \geq 4$. 
Moreover, as $k_2\leq 7$, the number of vertices with eccentricity $3$ are at most $7$.
Thus
\begin{equation}\label{T67}
\tau(\overline{B}) \geq 3(7) + 4(n-7) = 4n - 7 > \frac{7n}{2}-4 = \tau(\overline{B}_1).
\end{equation}
%
%
%
We consider the following three subcases.
\\[.2cm]
\textbf{(a)}
Let ${\rm rad}(\overline{B})=3$ and $k_2 \in \{3,4,5\}$. By Theorem~\ref{Harary}, we have $|V(C(\overline{B}))| \leq 7$. Thus $\tau(\overline{B})$ satisfies equation~\eqref{T67}.
\\[.2cm]
\textbf{(b)}
Let ${\rm rad}(\overline{B})=2$ and $k_2\in \{4,5\}$.
Take $v\in V(C(\overline{B}))$.
We observe that the center $C(\overline{B})$ is contained in $C_{k_2}$ and $n_2  = 1$. 
Assume $v$ to be the unique central vertex of $\overline{B}$.
Then for several possible choices for possible pendent vertices in the conjugated graph $\overline{B}$, one can observe that $\overline{B}$ is one of the graphs shown in Figure~\ref{r=2_k=45_bi}.
Moreover $n_4 \geq \frac{n}{2}-2$.
Then $\tau(\overline{B}) \geq 2n_2 + 3n_3 + 4n_4 = 2n_2 + 3(n_3 + n_4) + n_4 \geq 2(1) + 3(n-1) + \frac{n}{2}-2 = \frac{7n}{2} - 3 > \tau(\overline{B}_1)$.
\begin{figure}[h!]
\centering
\includegraphics[height=4.9cm,width=11cm]{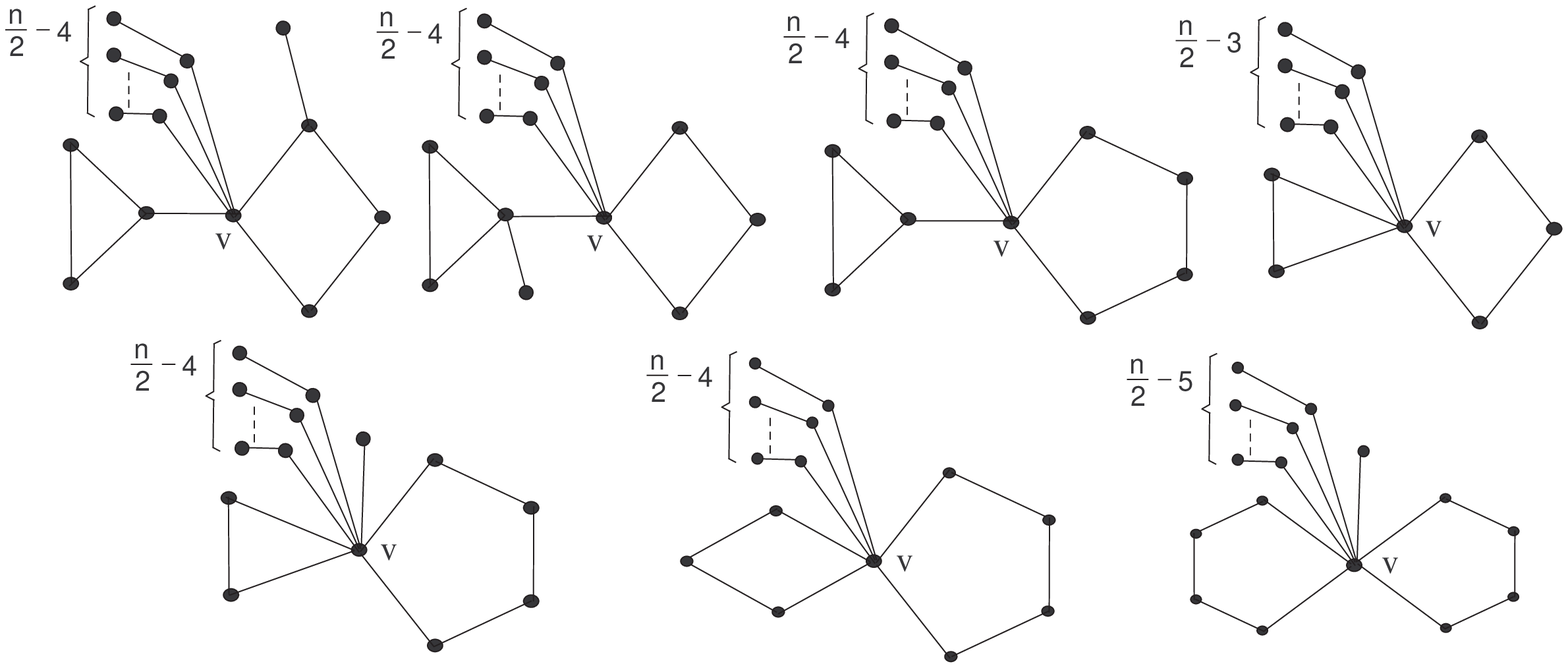}
\caption{The $n$-vertex conjugated bicyclic graphs discussed in Case 1-(b).}\label{r=2_k=45_bi}
\end{figure}
\\[.2cm]
\textbf{(c)}
Let ${\rm rad}(\overline{B})=2$ and $k_1 = k_2 = 3$. 
If $C(\overline{B}) \subseteq C_{k_2}$ then $C(\overline{B}) = K_1$, otherwise $n\leq 8$ which is not true.
Similarly, if $C(\overline{B}) \nsubseteq C_{k_2}$, then $C(\overline{B}) = K_2$ or $K_1$.
Since $n_2 \ngeq 2$, we have $n_2 = 1$.
Let $c$ be the unique central vertex.
Then $\overline{B}$ is isomorphic to one of the graphs shown in Figure~\ref{counterbi-2}.
When $c$ is not a vertex of $C_{k_1}$ or $C_{k_2}$,
then $n_4 \geq \frac{n}{2}+1$ and $\tau(\overline{B}) \geq 2(1) + 3(n-1) + \frac{n}{2}+1 = \frac{7n}{2} > \tau(\overline{B}_1)$.
If $c$ is a vertex of $C_{k_1}$ or $C_{k_2}$, then $n_4\geq \frac{n}{2}-3$.
Thus
$\tau(\overline{B}) \geq 2(1) + 3(n-1) + \frac{n}{2}-3 = \frac{7n}{2} - 4 = \tau(\overline{B}_1)$.
\begin{figure}[h]
\centering
\includegraphics[height=3cm,width=8.5cm]{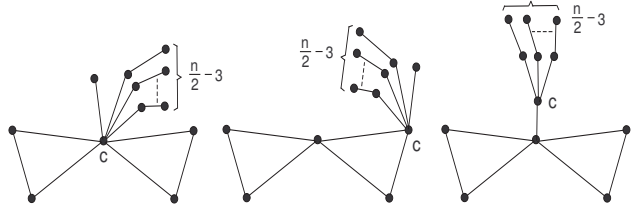}
\caption{The $n$-vertex conjugated bicyclic graphs studied in Case 1-(c).}\label{counterbi-2}
\end{figure}
\\
\textbf{Case 2.}
When cycles of $\overline{B}$ share some edges. Then there are cycles $C_{k_1}$, $C_{k_2}$ and $C_{k_3}$ in $\overline{B}$ of lengths $k_1$, $k_2$ and $k_3$, respectively.
Without loss of generality, assume that $k_1\leq k_2\leq k_3$.
Then $k_1,k_2 \geq 3$ and $k_3 \geq 4$.
%
%
%
%
Let $Q$ be the subgraph of $\overline{B}$ induced by the vertices of $C_{k_1}$, $C_{k_2}$ and $C_{k_3}$.
%
Clearly, $Q$ contains the cycle $C_{k_3}$.
%
Assume that $V(C_{k_3}) = \{v_1,v_2,\ldots,v_{k_3}\}$.
Then $\overline{B}$ is minimal with respect to total-eccentricity index if $Q$ can be obtained from the cycle $C_{k_3}$ by adding the edge $v_1v_{\lfloor\frac{k_3}{2} \rfloor + 1}$ or $v_1v_{\lfloor\frac{k_3}{2} \rfloor + 2}$.
%

When $k_3 \geq 13$, then $e_{\overline{B}}(w) \geq 4$ for all $w\in V(\overline{B})$.
This gives us $\tau(\overline{B}) \geq 4n > \tau(\overline{B}_1)$.
%
Assume that ${\rm rad}(\overline{B}) \leq 3$ and $k_3\leq 12$.
We consider the following two subcases:\\[.2cm]
\textbf{(a)} When $k_3\in \{9,10,11,12\}$. 
Then ${\rm rad}(\overline{B}) = 3$.
There exists a vertex $c$ with degree $3$ in $Q$ such that $c\in C(\overline{B})$ and $c$ has at most $\frac{n-10}{2}+1$ neighbours not in $Q$. Clearly, all of these vertices will have eccentricity $4$.
Then $\frac{n}{2}-5$ such vertices can have unique pendent vertices with eccentricity $5$. Moreover, there will be at least one vertex with eccentricity $5$ in $Q$, otherwise ${\rm rad}(\overline{B}) \neq 3$. Thus $n_5 \geq \frac{n}{2}-4$.
Moreover, at most $3$ vertices in $Q$ can have eccentricity $3$. This gives $1 \leq n_3 \leq 3$.
When $k_3=9$, such a graph $\overline{B}$ is shown in Figure~\ref{r=2_k3=4}(a). The vertex $v\in V(Q)$ such that $e_{\overline{B}}(v) = 5$ is also shown in the figure.
Using the facts that $n = n_3+n_4+n_5$  and $n_4+n_5 = n - n_3 \geq n - 3$.
We get 
\begin{eqnarray*}
\tau(\overline{B}) &=& 3n_3 + 4n_4 + 5n_5 \\
&=& 3n_3 + 4(n_4 + n_5) + n_5 \\
&\geq& 3(1) + 4(n - 3) + \frac{n}{2} - 4 \\
&=& \frac{9n}{2} - 13 \geq \tau(\overline{B}_1).
\end{eqnarray*}
%
\textbf{(b)}
When $k_3 \in \{4,5,6,7,8\}$. 
We first assume that ${\rm rad}(\overline{B}) = 3$.
Then by Theorem~\ref{Harary}, we have $1 \leq n_3 \leq 8$. Thus
$\tau(\overline{B}) \geq 3(8) + 4(n-8) = 4n - 8 \geq \tau(\overline{B}_1)$.
On the other hand, when ${\rm rad}(\overline{B}) = 2$. Then $n_2 = 1$.
When $C(\overline{B}) \nsubseteq Q$, then the minimal graph $\overline{B}$ is isomorphic to the graph shown in Figure~\ref{r=2_k3=4}(b). Clearly, $n_4 \geq \frac{n}{2}$. Thus $\tau(\overline{B}) = 2n_2 + 3(n_3 + n_4) + n_4 \geq 2(1) + 3(n-1) + \frac{n}{2} = \frac{7n}{2} - 1 > \tau(\overline{B}_1)$.
When $C(\overline{B}) \subseteq Q$, then $\overline{B}$ is isomorphic to one of the graphs shown in Figures~\ref{r=2_k3=4}(c)$-$\ref{r=2_k3=4}(h). 
It can be seen that $n_4\geq \frac{n}{2}-2$. Thus we can write $\tau(\overline{B}) = 2n_2 + 3(n_3 + n_4) + n_4 \geq 2(1) + 3(n-1) + \frac{n}{2} - 2 = \frac{7n}{2} - 3 = \tau(\overline{B}_1)$.
\begin{figure}[h!]
\centering
\includegraphics[width=15.7cm]{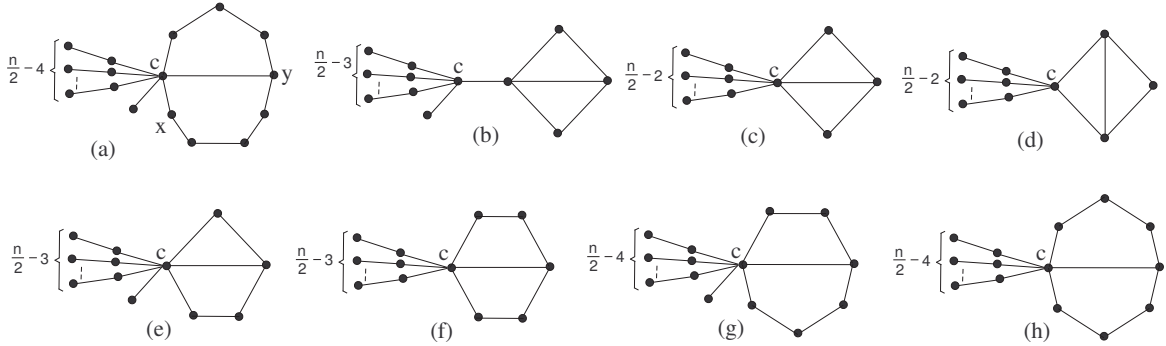}
\caption{The conjugated bicyclic graphs studied in Case 2. 
The vertices $c,x$ and $y$ represent central vertices.
}\label{r=2_k3=4}
\end{figure}
\\
Combining the results of Case 1 and Case 2, we see that among all conjugated bicyclic graphs, $\overline{B}_1$ has the minimal total-eccentricity index.
The proof is complete.
\end{proof}
\begin{theorem}\label{tau conj bi max}
Let $n\equiv 0\pmod 2$. Then among the $n$-vertex conjugated bicyclic graphs, the graph $\overline{B}_2$ shown in Figure~\ref{ext conj} has the maximal total-eccentricity index.
\end{theorem}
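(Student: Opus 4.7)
The plan is to mirror the short argument that established Theorem~\ref{tau conj uni max}. Since every $n$-vertex conjugated bicyclic graph is in particular an $n$-vertex bicyclic graph, the class in question is a subclass of all $n$-vertex bicyclic graphs. By Theorem~\ref{tau max bi}, the graph $B_2$ attains the maximum total-eccentricity index over the whole class of $n$-vertex bicyclic graphs. Therefore, as soon as one verifies that $B_2$ itself lies in the conjugated subclass when $n \equiv 0 \pmod 2$, the conclusion $\tau(\overline{B}) \le \tau(B_2) = \tau(\overline{B}_2)$ is immediate for every conjugated bicyclic graph $\overline{B}$.

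The only content of the proof is therefore to exhibit a perfect matching in $B_2$ for even $n$. Recall that $B_2$ is obtained from the path $v_1 v_2 \ldots v_n$ by adding the two chords $v_1 v_3$ and $v_1 v_4$. For $n$ even I would display the explicit matching
\begin{equation*}
M = \{v_1 v_2,\; v_3 v_4,\; v_5 v_6,\; v_7 v_8,\; \ldots,\; v_{n-1} v_n\},
\end{equation*}
whose edges are pairwise non-adjacent (the first two edges are in the bicyclic ``head'' $v_1 v_2 v_3 v_4$ of the graph, and the remaining edges lie on the path-tail $v_5 v_6 \ldots v_n$) and cover every vertex of $B_2$. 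This shows $B_2$ is conjugated, hence $B_2 \cong \overline{B}_2$ as indicated in the definition of $\overline{B}_2$ in the introduction.

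Combining these two observations completes the proof: for any $n$-vertex conjugated bicyclic graph $\overline{B}$, Theorem~\ref{tau max bi} yields $\tau(\overline{B}) \le \tau(B_2)$, and the perfect matching $M$ above identifies $B_2$ with $\overline{B}_2$, so the bound is attained precisely by $\overline{B}_2$.

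There is no real obstacle here; the argument is structurally identical to the proof of Theorem~\ref{tau conj uni max}, the mild substantive point being the explicit verification of the perfect matching in $B_2$. Unlike the proof of Theorem~\ref{tau conj bi min}, no case analysis on the number of cycles, the radius, or the location of the centre is required, because the extremal graph for the wider class already happens to be conjugated.
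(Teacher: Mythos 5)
Your proposal is correct and is essentially the paper's own argument: the paper simply notes that the result "can be derived from the proof of Theorem~\ref{tau max bi}," i.e., the maximizer $B_2$ over all bicyclic graphs is itself conjugated for even $n$, exactly as in the proof of Theorem~\ref{tau conj uni max}. Your explicit perfect matching $\{v_1v_2, v_3v_4, \ldots, v_{n-1}v_n\}$ is a valid (and welcome) verification of the one detail the paper leaves implicit.
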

\begin{proof}
For $n\equiv 0(\bmod 2)$, the proof can be derived from the proof of Theorem~\ref{tau max bi}.
\end{proof}
\begin{corollary}
For any conjugated bicyclic graph $\overline{B}$, we have $\frac{7n}{2}-4 \leq \tau(\overline{B}) \leq \frac{3n^2}{4} - n -2$.
\end{corollary}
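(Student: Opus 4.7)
The plan is to obtain both inequalities as an immediate synthesis of the two preceding theorems with the explicit numerical formulas in \eqref{U_B_1}; no new structural argument is needed. Concretely, Theorem~\ref{tau conj bi min} pins down $\overline{B}_1$ as the $\tau$-minimizer among $n$-vertex conjugated bicyclic graphs (for even $n\geq 10$), and \eqref{U_B_1} records $\tau(\overline{B}_1)=\frac{7n}{2}-4$, so combining these gives the lower bound $\tau(\overline{B})\geq \frac{7n}{2}-4$. Symmetrically, Theorem~\ref{tau conj bi max} identifies $\overline{B}_2$ as the $\tau$-maximizer, and \eqref{U_B_1} gives $\tau(\overline{B}_2)=\frac{3n^2}{4}-n-2$, which yields the upper bound.

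The one loose end is the sporadic regime $n\in\{4,6,8\}$ not covered by Theorem~\ref{tau conj bi min}. Here I would appeal to Remark~\ref{Theorem 3.4}, which lists the minimizers explicitly in Figure~\ref{counterbi}, and simply verify by direct eccentricity count that each of those small minimizers satisfies $\tau(\overline{B})\geq \frac{7n}{2}-4$ (typically with strict slack, since the formula $\frac{7n}{2}-4$ was designed to match the asymptotic family $\overline{B}_1$). The upper bound is unproblematic for every even $n$ since $\overline{B}_2$ is a well-defined conjugated bicyclic graph as soon as $n\geq 4$ is even, and Theorem~\ref{tau conj bi max} is stated without the $n\geq 10$ caveat.

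There is no real obstacle in this proof; the statement is a bookkeeping corollary whose substance was absorbed into Theorems~\ref{tau conj bi min} and~\ref{tau conj bi max}. The only care needed is being explicit that the displayed constants $\frac{7n}{2}-4$ and $\frac{3n^2}{4}-n-2$ are exactly the values of $\tau(\overline{B}_1)$ and $\tau(\overline{B}_2)$ computed in \eqref{U_B_1}, so the chain $\tau(\overline{B}_1)\leq \tau(\overline{B})\leq \tau(\overline{B}_2)$ translates directly into the claimed inequality.
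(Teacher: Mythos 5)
Your main line is the same as the paper's: the corollary is read off from Theorem~\ref{tau conj bi min}, Theorem~\ref{tau conj bi max} and the explicit values $\tau(\overline{B}_1)=\frac{7n}{2}-4$, $\tau(\overline{B}_2)=\frac{3n^2}{4}-n-2$ from \eqref{U_B_1}. (The paper's proof actually cites \eqref{T67}, which is evidently a typo for \eqref{U_B_1}; your citation is the correct one.)

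However, the patch you propose for the regime $n\in\{4,6,8\}$ does not work, and this is a genuine gap in your argument rather than a formality. For $n=4$ the only conjugated bicyclic graph is $K_4$ minus an edge, whose eccentricity sequence is $(1,1,2,2)$, so $\tau=6$, while $\frac{7n}{2}-4=10$; the lower bound fails outright. For $n=6$ one can take a vertex joined to all five others together with two further independent edges among them; this is conjugated, bicyclic, and has $\tau=1+2\cdot 5=11<17=\frac{7n}{2}-4$. A similar radius-$2$ construction on $8$ vertices gives $\tau=22<24$. So the ``direct eccentricity count with strict slack'' you anticipate for the graphs of Remark~\ref{Theorem 3.4} would in fact refute the inequality, not confirm it. The correct resolution is that the lower bound of the corollary must inherit the hypothesis $n\geq 10$ (and $n$ even) from Theorem~\ref{tau conj bi min}; the paper leaves this restriction implicit, but your attempt to remove it by checking small cases cannot succeed. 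The upper bound, as you say, is unproblematic for all even $n\geq 4$.
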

\begin{proof}
The result follows by using Theorem~\ref{tau conj bi min}, Theorem~\ref{tau conj bi max} and equation~\eqref{T67}.
\end{proof}

\section{Conclusion}
In this paper, we extended the results of Farooq et al.~\cite{Farooq2017} and studied the extremal conjugated unicyclic and bicyclic graphs with respect to total-eccentricity index.

\section*{Acknowledgements}
The authors are thankful to the Higher Education Commission of Pakistan for supporting this research under the grant 20-3067/NRPU/R$\&$D/HEC/12/831.

\end{document}